\newcommand{\N}{\mathbb{N}}
\newcommand{\R}{\mathbb{R}}
\newcommand{\D}{\mathcal{D}}
\newcommand{\dx}{\, {\rm d} x}
\newcommand{\drho}{\, {\rm d} \rho}
\newcommand{\eps}{\varepsilon}
\newcommand{\loc}{{\rm loc}}
\renewcommand{\phi}{\varphi}
\newtheorem{lemma}{Lemma}[section]
\newtheorem{thm}[lemma]{Theorem}
\newtheorem{prop}[lemma]{Proposition}
\theoremstyle{definition}
\newtheorem{rmk}[lemma]{Remark}
\numberwithin{equation}{section}
\DeclareMathOperator*{\essinf}{ess \, inf}
\DeclareMathOperator*{\supp}{supp}
\DeclareMathOperator*{\dist}{dist}
\begin{document}

\title[A note on gradient estimates for $p$-Laplacian equations]{A note on gradient estimates for $p$-Laplacian equations}
\author[U. Guarnotta]{Umberto Guarnotta}
\address[U. Guarnotta]{Dipartimento di Matematica e Informatica, Universit\`a degli Studi di Catania, Viale A. Doria 6, 95125 Catania, Italy}
\email{umberto.guarnotta@studium.unict.it}
\author[S.A. Marano]{Salvatore A. Marano}
\address[S.A. Marano]{Dipartimento di Matematica e Informatica, Universit\`a degli Studi di Catania, Viale A. Doria 6, 95125 Catania, Italy}
\email{marano@dmi.unict.it}

\maketitle

\begin{abstract}
The aim of this short paper is to show that some assumptions in \cite{GMM} can be relaxed and even dropped when looking for weak solutions instead of strong ones. This improvement is a consequence of two results concerning gradient terms: an $L^\infty$ estimate, which exploits nonlinear potential theory, and a compactness result, based on the classical Riesz-Fréchet-Kolmogorov theorem.
\end{abstract}

{
\let\thefootnote\relax
\footnote{{\bf{MSC 2020}}: 35J15, 35J47, 35D30, 35D35.}
\footnote{{\bf{Keywords}}: a priori estimates, compactness, convection terms, strong solutions.}
\footnote{\Letter \quad Corresponding author: Umberto Guarnotta (umberto.guarnotta@studium.unict.it).}
}
\setcounter{footnote}{0}

\section{Introduction}

In this brief note, whose starting point is \cite{GMM}, we consider the problem
\begin{equation}
\label{prob}
\tag{${\rm P}$}
\left\{
\begin{alignedat}{2}
-\Delta_p u &= f(x,u,v,\nabla u, \nabla v) \; \; &&\mbox{in} \; \; \R^N, \\
-\Delta_q v &= g(x,u,v,\nabla u, \nabla v) \; \; &&\mbox{in} \; \; \R^N, \\
u,v &> 0 \; \; &&\mbox{in} \; \; \R^N,
\end{alignedat}
\right.
\end{equation}
where $N\geq 2$, $1<p,q<N$, $\Delta_r z := {\rm div}(|\nabla z|^{r-2}\nabla z)$ denotes the $r$-Laplacian of $z$ for $1<r<+\infty$, while $f,g:\R^N\times(0,+\infty)^2\times\R^{2N}\to(0,+\infty)$ are Carathéodory functions satisfying the following hypotheses.

\begin{enumerate}[label={$ \underline{\rm H_1(f)} $},ref={$ \rm H_1(f) $}]
\item \label{hypf}
There exist $\alpha_1\in (-1,0]$, $\beta_1,\delta_1\in [0, q-1)$, $\gamma_1\in [0,p-1)$, $ m_1, \hat{m}_1 > 0 $, and a measurable $a_1:\R^N\to(0,+\infty)$ such that
\begin{equation*}
m_1 a_1(x) s_1^{\alpha_1} s_2^{\beta_1} \leq f(x,s_1,s_2,{\bf t}_1,{\bf t}_2) \leq \hat{m}_1 a_1(x) \left( s_1^{\alpha_1} s_2^{\beta_1} + |{\bf t}_1|^{\gamma_1} + |{\bf t}_2|^{\delta_1} \right)
\end{equation*}
in $\R^N \times (0,+\infty)^2 \times\R^{2N}$. Moreover, $\displaystyle{\essinf_{B_\rho} a_1 > 0}$ for all $\rho>0$.
\end{enumerate}
\begin{enumerate}[label={$ \underline{\rm H_1(g)} $},ref={$ \rm H_1(g) $}]
\item \label{hypg}
There exist $\beta_2\in (-1,0]$, $\alpha_2, \gamma_2\in [0,p-1)$, $\delta_2 \in [0,q-1)$, $ m_2, \hat{m}_2 > 0 $, and a measurable $a_2:\R^N\to(0,+\infty)$ such that
\begin{equation*}
m_2 a_2(x) s_1^{\alpha_2} s_2^{\beta_2} \leq g(x,s_1,s_2,{\bf t}_1,{\bf t}_2) \leq \hat{m}_2 a_2(x) \left( s_1^{\alpha_2} s_2^{\beta_2} + |{\bf t}_1|^{\gamma_2} + |{\bf t}_2|^{\delta_2} \right)
\end{equation*}
in $ \R^N \times (0,+\infty)^2 \times\R^{2N} $. Moreover, $\displaystyle{\essinf_{B_\rho} a_2 > 0}$ for all $\rho>0$.
\end{enumerate}
\begin{enumerate}[label={$ \underline{\rm H_1(a)} $},ref={$ \rm H_1(a) $}]
\item \label{hypa}
There exist $ \zeta_1,\zeta_2 \in (N,+\infty] $ such that $ a_i \in L^1(\R^N) \cap L^{\zeta_i}(\R^N) $, $ i=1,2 $, where
\begin{equation*}
\frac{1}{\zeta_1} < 1 - \frac{p}{p^*} - \theta_1, \quad \frac{1}{\zeta_2} < 1 - \frac{q}{q^*} - \theta_2,
\end{equation*}
with
\begin{equation*}
\begin{split}
\theta_1 :=\max\left\{\frac{\beta_1}{q^*},\frac{\gamma_1}{p},\frac{\delta_1}{q}\right\} < 1 - \frac{p}{p^*}, \quad\theta_2 :=\max\left\{\frac{\alpha_2}{p^*},\frac{\gamma_2}{p},\frac{\delta_2}{q}\right\}< 1-
\frac{q}{q^*}.
\end{split}
\end{equation*}
\end{enumerate}
\begin{enumerate}[label={$ \underline{\rm H_2} $},ref={$ \rm H_2 $}]
\item \label{hypeta}
If $ \eta_1 := \max\{\beta_1,\delta_1\} $ and $ \eta_2 := \max\{\alpha_2,\gamma_2\} $ then
\begin{equation*}
\eta_1 \eta_2 < (p-1-\gamma_1)(q-1-\delta_2).
\end{equation*}
\end{enumerate}
In the sequel, by \hypertarget{H1}{${\rm H_1}$} we mean the set of hypotheses \ref{hypf}, \ref{hypg}, and \ref{hypa}.

Unlike \cite{GMM}, we restrict our attention to weak solutions instead of strong ones. This allows us to weaken several assumptions, in particular:
\begin{itemize}
\item $p,q>2-\frac{1}{N}$ is relaxed to $p,q>1$;
\item hypothesis ${\rm H_3}$ (cf.~\cite[p.~743]{GMM}), ensuring a high local summability of reactions, is dropped;
\item no local summability for $a_1,a_2$ is required (cf.~\ref{hypf}--\ref{hypg}).
\end{itemize}
Let us briefly comment these improvements, focusing our attention on the first equation of \eqref{prob}, since our arguments are scalar (i.e., they do not depend on the system structure). The lower bound on $p$ was used to prove \cite[Lemma 2.1]{GMM} and, jointly with ${\rm H_3}$, to guarantee the strong convergence of $\{|\nabla u_n|^{p-2}\nabla u_n\}$ in $L^{p'}_\loc(\R^N)$, being $\{u_n\}$ a sequence of solutions (precisely, their first components) to problems approximating \eqref{prob} (see \cite[formula (4.5)]{GMM}). On the other hand, in the assumption $a_1\in L^{s_p}_{\rm loc}(\R^N)$, the number $s_p$ was supposed to be greater than $p'N$, which ensures the local $C^{1,\alpha}$ regularity in \cite[Lemma 3.1]{GMM} by \cite{DB}. Actually, the same result can be obtained by requiring merely $s_p>N$, according to \cite{L}, so that one can take $s_p:=\zeta_1$ with no additional assumptions, where $\zeta_1$ stems from \ref{hypa}. Another consequence of using \cite{L} instead of \cite{DB} is that ${\rm H_3'}$ in \cite[Remark 4.4]{GMM} can be relaxed to
\begin{equation*}
\frac{1}{s_p} +\max \left\{ \frac{\gamma_1}{p},\frac{\delta_1}{q} \right\} < \frac{1}{N}\, ,\quad
\frac{1}{s_q} + \max \left\{ \frac{\gamma_2}{p},\frac{\delta_2}{q} \right\} < \frac{1}{N}\, .
\end{equation*}

Convergence of gradient terms comes into play whenever a second-order differential problem needs to be approximated: this can occur because of lack of ellipticity (or uniform ellipticity) of the principal part and/or presence of non-smooth reaction terms; see, e.g., \cite[Theorem 3.3]{GM}. An approximation procedure is necessary also in the context of singular problems, that is, problems whose reaction term blows up when the solution approaches to zero, as \eqref{prob}; for an account on this topic, vide \cite{GLM1,GLM2}.

Here, we proceed as follows. Lemma 2.1 of \cite{GMM} is restated in a new, general fashion and its proof is given patterned after the one in \cite{GMM}; see Lemma \ref{mingionecor}. Next, we prove a compactness result (Lemma \ref{convgrad}) for gradient terms, which is self-contained (unlike the alternative proofs mentioned in Remark \ref{alternative}) and relies on the basic Riesz-Fréchet-Kolmogorov $L^p$-compactness criterion. Finally, it is shown (in Theorem \ref{weaksol}) how to modify the proof of \cite[Lemma 4.1]{GMM} to get a weak solution under \hyperlink{H1}{${\rm H_1}$}--\ref{hypeta}, besides commenting the unavailability of \cite[Lemma 4.3]{GMM}, pertaining strong solutions, in this context (see Remark \ref{strongsol}).

\subsection*{Notations}
Hereafter $\Omega$ is a bounded domain of $\R^N$, $N\geq 2$, and $p\in(1,+\infty)$. We set $p':=\frac{p}{p-1}$ and, provided $p<N$, $p^*:=\frac{Np}{N-p}$. If $p\geq N$ then $p^*:=\infty$ and $(p^*)':=1$. We write $\dist(A,B)$ for the distance between the sets $A,B\subseteq \R^N$. The symbol $B_R(x)$ indicates the (open) ball of center $x\in\R^N$ and radius $R>0$, while $\overline{B}_R(x)$ stands for the closure of $B_R(x)$. By $B_R(x) \Subset \Omega$ we mean $\overline{B}_R(x) \subseteq \Omega$. The center of any ball will be omitted when it is irrelevant. \\
Given $f\in L^1_\loc(\R^N)$, a distributional solution to
\begin{equation}
\label{baseprob}
-\Delta_p u = f(x) \quad \mbox{in}\;\; \R^N
\end{equation}
is a function $u\in W^{1,p}_\loc(\R^N)$ such that
\begin{equation}
\label{weakform}
\int_\Omega |\nabla u|^{p-2}\nabla u \cdot \nabla \phi \dx = \int_\Omega f\phi \dx \quad \forall \phi\in C^\infty_c(\R^N).
\end{equation}
If $f\in L^{(p^*)'}(\R^N)$, by weak solution to \eqref{baseprob} we mean a function $u\in \D^{1,p}_0(\R^N)$ satisfying \eqref{weakform} for all $\phi\in\D^{1,p}_0(\R^N)$. Analogous definitions hold when $\Omega$ replaces $\R^N$ or $f$ depends on $u,\nabla u$. \\
The number $C>0$ represents a suitable constant, which may change its value at each passage. For further details, we address the reader to \cite[Section 2]{GMM}.

\section{Main results}

For any $ f \in L^2_\loc(\Omega) $ we define the nonlinear potential
\begin{equation*}
\begin{split}
&P_f(x,R) := \int_0^R \left( \frac{|f|^2(B_\rho(x))}{\rho^{N-2}} \right)^{\frac{1}{2}} \frac{\!\drho}{\rho},  \quad \mbox{with} \; \; |f|^2(B_\rho(x)) := \|f\|_{L^2(B_\rho(x))}^2.
\end{split}
\end{equation*}

We recall the following result, provided in \cite{DM}.
\begin{prop}
\label{mingionethm}
Let $ u \in W^{1,p}_\loc(\Omega) $ be a distributional solution to 
\begin{equation}
\label{mingioneprob}
-\Delta_p u = f(x) \quad \mbox{in} \;\; \Omega,
\end{equation}
with $ f \in L^r_{\rm loc}(\Omega) $, $ r :=\max\{2,(p^*)'\} $. Then there exists $ C = C(N,p) > 0 $ such that
\begin{equation*}
\begin{split}
\|\nabla u\|_{L^\infty(B_R)} &\leq C \left[\left( \frac{1}{|B_{2R}|} \int_{B_{2R}} |\nabla u|^p \dx \right)^{\frac{1}{p}} + \|P_f(\cdot,2R)\|_{L^\infty(B_{2R})}^{\frac{1}{p-1}} \right]
\end{split}
\end{equation*}
for any $ B_{2R} \Subset \Omega $.
\end{prop}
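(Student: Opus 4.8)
Since Proposition~\ref{mingionethm} is quoted essentially verbatim from \cite{DM}, I would ultimately defer to that source for the full argument; nevertheless, the proof that naturally suggests itself — and the one underlying \cite{DM} — is the comparison-and-iteration scheme of nonlinear potential theory, which I sketch here. The estimate is purely local, so I would fix $B_{2R}\Subset\Omega$ and an arbitrary point $x_0\in B_R$ and aim at the pointwise inequality
\[
|\nabla u(x_0)|\le C\left[\left(\frac{1}{|B_R(x_0)|}\int_{B_R(x_0)}|\nabla u|^p\dx\right)^{1/p}+P_f(x_0,R)^{1/(p-1)}\right],
\]
to be established at \emph{every} $x_0\in B_R$, and not merely almost everywhere; once this is done, passing to the supremum over $x_0\in B_R$ and using $B_R(x_0)\subseteq B_{2R}$ together with $P_f(x_0,R)\le P_f(x_0,2R)$ yields the statement.

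The first ingredient is a comparison estimate on small balls. For $0<\rho\le R$ I would let $w=w_\rho\in u+W^{1,p}_0(B_\rho(x_0))$ be the $p$-harmonic replacement of $u$, i.e.\ the solution of $-\Delta_p w=0$ in $B_\rho(x_0)$ with $w-u\in W^{1,p}_0(B_\rho(x_0))$. Subtracting the equations for $u$ and $w$, testing with $u-w$, and using the monotonicity of the field $\xi\mapsto|\xi|^{p-2}\xi$ (which gives $|\nabla u-\nabla w|^p$ when $p\ge2$ and the weaker quantity $|\nabla u-\nabla w|^2(|\nabla u|+|\nabla w|)^{p-2}$ when $1<p<2$), one controls the averaged energy of $u-w$ by a multiple of $\rho^{-N}\int_{B_\rho(x_0)}|f|\,|u-w|\dx$. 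Bounding this last term by Hölder's inequality, Sobolev's inequality and — crucially — Cauchy-Schwarz applied to the factor $f$ (this is exactly where $f\in L^2_\loc$ is used and why an $L^2$-based potential appears) leads, in the range $p\ge 2$, to
\[
\left(\frac{1}{|B_\rho(x_0)|}\int_{B_\rho(x_0)}|\nabla u-\nabla w|^p\dx\right)^{1/p}\le C\left(\frac{|f|^2(B_\rho(x_0))}{\rho^{N-2}}\right)^{\frac{1}{2(p-1)}},
\]
whose right-hand side is precisely the $\frac{1}{p-1}$-th power of the integrand defining $P_f(x_0,\cdot)$; in the singular range $1<p<2$ the same argument produces the analogous bound carrying an extra factor of the form $\big(\frac{1}{|B_\rho(x_0)|}\int_{B_\rho(x_0)}|\nabla u|^p\dx\big)^{(2-p)/p}$, inherited from the weaker monotonicity inequality.

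The second ingredient is the regularity of $p$-harmonic functions: being a solution of $-\Delta_p w=0$, the replacement $w$ is of class $C^{1,\alpha}_\loc$ for some $\alpha=\alpha(N,p)\in(0,1)$, so that $\sup_{B_{\rho/2}(x_0)}|\nabla w|\le C\big(\frac{1}{|B_\rho(x_0)|}\int_{B_\rho(x_0)}|\nabla w|^p\dx\big)^{1/p}$ and, with $E_w(t)$ denoting the $L^p$-averaged oscillation of $\nabla w$ on $B_t(x_0)$, the excess decay $E_w(\sigma\rho)\le C\sigma^{\alpha}E_w(\rho)$ holds for $0<\sigma<1$. Combining this with the comparison estimate through the triangle inequality, fixing $\sigma$ small, and writing $E(t)$ for the $L^p$-averaged oscillation of $\nabla u$ on $B_t(x_0)$, I would arrive at a Campanato-type recursion
\[
E(\sigma\rho)\le\tfrac12 E(\rho)+C\left(\frac{|f|^2(B_\rho(x_0))}{\rho^{N-2}}\right)^{\frac{1}{2(p-1)}}
\]
(with the extra self-improving term when $p<2$). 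Iterating along the radii $\rho_j:=\sigma^jR$, summing the geometric part, and comparing the resulting series $\sum_j\big(|f|^2(B_{\rho_j}(x_0))/\rho_j^{N-2}\big)^{1/2}$ with the integral $P_f(x_0,R)$ — an elementary step, since the integrand is essentially constant on dyadic annuli — one obtains at once that $\{(\nabla u)_{B_{\rho_j}(x_0)}\}_j$ is a Cauchy sequence, so that $x_0$ is a Lebesgue point of $\nabla u$ with a well-defined precise value (this is what upgrades the conclusion from $L^p_\loc$ to $L^\infty$), together with the sought pointwise bound at $x_0$; passing to the supremum over $x_0\in B_R$ then finishes the argument.

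I expect the genuine difficulty to lie in the singular regime $1<p<2$. There the comparison estimate is only \emph{self-improving} — its right-hand side still involves an average of $|\nabla u|$ — so the recursion above cannot be closed on its own and must be run jointly with an a priori control of the gradient averages in order to reabsorb that term; additionally, when $p^*<2$ the duality pairing $\int f(u-w)$ has to be handled with more care than the naive Cauchy-Schwarz step suggests. This is precisely the technical core of \cite{DM}, and the reason the plain $p\ge2$ iteration cannot be transcribed verbatim; the remaining pieces — the monotonicity inequalities, the $C^{1,\alpha}$ theory for $p$-harmonic functions, and the summation of the potential — are classical.
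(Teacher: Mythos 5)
The paper offers no proof of this proposition: it is explicitly recalled from \cite{DM} (Duzaar--Mingione), so the only "approach" to compare against is the cited source. Your sketch faithfully reproduces that source's comparison-and-iteration scheme ($p$-harmonic replacement, excess decay, Campanato-type recursion, summation of the dyadic series against the potential $P_f$), and you correctly identify the genuinely delicate points -- the self-improving comparison estimate for $1<p<2$ and the upgrade from a.e.\ to everywhere via Lebesgue points -- so the proposal is an accurate outline of the argument the paper delegates to \cite{DM}.
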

\begin{rmk}
\label{veryweak}
As observed in \cite[p. 1363]{DM}, the condition $ r \geq (p^*)' $ is not used to prove the result, but it guarantees that $ u $ is a weak solution, and not merely a \textit{very weak solution}; in the latter case, an approximation procedure yields the existence of a very weak solution $ u \in W^{1,p-1}(\Omega) $ of \eqref{mingioneprob}. For a thorough treatment on approximable solutions, see \cite{CM-NA}.
\end{rmk}

\begin{lemma}
\label{mingionecor}
Let $ u \in \mathcal{D}^{1,p}_0(\R^N) $ be a distributional solution to
\begin{equation*}
-\Delta_p u = f(x) \quad \mbox{in} \;\; \R^N,
\end{equation*}
with $f\in L^r(\R^N)$, $r > N$. Then $ \nabla u \in L^\infty(\R^N) $. More precisely, there exists $ C = C(N,p) > 0 $ such that
\begin{equation*}
\|\nabla u\|_{L^\infty(\R^N)}^{p-1} \leq C \left(\|\nabla u\|_{L^p(\R^N)}^{p-1} + \|f\|_{L^r(\R^N)}\right).
\end{equation*}
\end{lemma}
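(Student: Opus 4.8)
The plan is to apply Proposition~\ref{mingionethm} on a family of balls exhausting $\R^N$ and then let the radii go to infinity, using the decay of the $\mathcal{D}^{1,p}_0$-seminorm and a uniform estimate on the nonlinear potential $P_f$. First I would check that the hypotheses of Proposition~\ref{mingionethm} are met: since $r>N\geq 2$ and $r>N\geq\frac{N}{N-1}\geq(p^*)'$ (recall $(p^*)'=\frac{Np}{Np-N+p}\leq\frac{N}{N-1}$ for $p<N$, and $(p^*)'=1$ for $p\geq N$), we have $r\geq\max\{2,(p^*)'\}=:r_0$, so $f\in L^r(\R^N)\subseteq L^{r_0}_\loc(\R^N)$ and $u\in\mathcal{D}^{1,p}_0(\R^N)\subseteq W^{1,p}_\loc(\R^N)$ is a distributional solution, whence the estimate in Proposition~\ref{mingionethm} holds on every $B_{2R}\Subset\R^N$, i.e.\ for every $R>0$.

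Next I would bound the potential term uniformly in $x$ and $R$. By H\"older's inequality on $B_\rho(x)$ with exponents $\frac{r}{2}$ and its conjugate (valid since $r>N\geq 2$), one gets $|f|^2(B_\rho(x))=\|f\|_{L^2(B_\rho(x))}^2\leq\|f\|_{L^r(B_\rho(x))}^2|B_\rho(x)|^{1-2/r}\leq C\|f\|_{L^r(\R^N)}^2\rho^{N(1-2/r)}$. Therefore the integrand in $P_f(x,2R)$ is controlled by
\begin{equation*}
\left(\frac{|f|^2(B_\rho(x))}{\rho^{N-2}}\right)^{1/2}\frac{1}{\rho}\leq C\|f\|_{L^r(\R^N)}\,\rho^{\frac{N}{2}(1-\frac{2}{r})-\frac{N-2}{2}-1}=C\|f\|_{L^r(\R^N)}\,\rho^{-\frac{N}{r}},
\end{equation*}
and since $r>N$ the exponent $-\frac{N}{r}$ is strictly greater than $-1$, so $\int_0^{2R}\rho^{-N/r}\,\drho=\frac{(2R)^{1-N/r}}{1-N/r}<\infty$. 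This yields $\|P_f(\cdot,2R)\|_{L^\infty(\R^N)}\leq C(N,r)\,\|f\|_{L^r(\R^N)}\,R^{1-N/r}$, which crucially does \emph{not} depend on the center, and in particular $P_f(\cdot,2R)$ is finite; the $R$-dependent factor will be harmless after the limiting argument because the other term decays.

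Finally I would pass to the limit $R\to\infty$. For the gradient term, $\frac{1}{|B_{2R}|}\int_{B_{2R}}|\nabla u|^p\dx\leq\frac{C}{R^N}\|\nabla u\|_{L^p(\R^N)}^p\to 0$, since $\nabla u\in L^p(\R^N)$. Plugging the two bounds into Proposition~\ref{mingionethm} gives, for every $x_0\in\R^N$ and every $R$ with $B_R(x_0)\ni x_0$,
\begin{equation*}
\|\nabla u\|_{L^\infty(B_R(x_0))}\leq C\left[\Big(\tfrac{C}{R^N}\|\nabla u\|_{L^p(\R^N)}^p\Big)^{1/p}+\big(C\|f\|_{L^r(\R^N)}R^{1-N/r}\big)^{1/(p-1)}\right];
\end{equation*}
a cleaner route, avoiding any spurious growth, is to fix instead a \emph{normalized} choice of $R$ or to observe that $\esssup$ is monotone and apply the estimate on a single $B_R$ with $R$ chosen so that $x_0\in B_R(0)$: letting $R\to\infty$ the first term vanishes, but the second blows up, so I should not be cavalier here. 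The correct fix is to keep $R$ fixed, say $R=1$ centered at an arbitrary $x_0$, giving a \emph{local} bound with center-independent constants, namely $\|\nabla u\|_{L^\infty(B_1(x_0))}^{p-1}\leq C\big(\|\nabla u\|_{L^p(B_2(x_0))}^{p-1}+\|f\|_{L^r(\R^N)}\big)\leq C\big(\|\nabla u\|_{L^p(\R^N)}^{p-1}+\|f\|_{L^r(\R^N)}\big)$, and then take the supremum over $x_0\in\R^N$. The main obstacle, and the only genuinely delicate point, is precisely this: squeezing a \emph{global} $L^\infty$ bound out of Proposition~\ref{mingionethm}, whose right-hand side contains the $R$-growing potential norm and an averaged gradient; the resolution is to exploit the scale invariance (center-independence) of the local estimate rather than to send $R\to\infty$, so that a fixed finite radius suffices and the constants are uniform in the center.
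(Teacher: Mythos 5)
Your proof is correct and takes essentially the same route as the paper's: apply Proposition~\ref{mingionethm} on the fixed-radius balls $B_1(x)\Subset B_2(x)$, bound the potential term via H\"older's inequality (giving $\|f\|_{L^2(B_\rho(y))}\leq C\|f\|_{L^r(\R^N)}\rho^{\frac{N}{2}-\frac{N}{r}}$, hence an integrand $\rho^{-N/r}$ which is integrable near $0$ precisely because $r>N$), and then take the supremum over the center $x\in\R^N$; your discussion of why $R\to\infty$ would fail is a detour, but you land on the paper's argument. One small slip: the inequality $(p^*)'\leq\frac{N}{N-1}$ is false in general (e.g.\ $N=10$, $p=\frac{3}{2}$ gives $(p^*)'=\frac{30}{13}>\frac{10}{9}$); what you actually need, and what is true for all $1<p<N$, is $(p^*)'=\frac{Np}{Np-N+p}\leq N$, so that $r>N\geq\max\{2,(p^*)'\}$ still holds.
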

\begin{proof}
Pick any $ x \in \R^N $. By Proposition \ref{mingionethm} and H\"older's inequality (with exponents $\frac{r}{2}$ and $\frac{r}{r-2}$), after observing that $ r > N \geq \max\{2,(p^*)'\} $, we get
\begin{equation*}
\begin{split}
|\nabla u(x)|^{p-1} &\leq \|\nabla u\|_{L^\infty(B_1(x))}^{p-1} \\
&\leq C \left[ \left( \frac{1}{|B_2(x)|} \int_{B_2(x)} |\nabla u|^p \dx \right)^{\frac{1}{p'}} + \|P_f(\cdot,2)\|_{L^\infty(B_2(x))} \right] \\
&\leq C \left[ \|\nabla u\|_{L^p(\R^N)}^{p-1} + \sup_{y \in B_2(x)} \, \int_0^2 \rho^{-\frac{N}{2}} \|f\|_{L^2(B_\rho(y))} \drho \right] \\
&\leq C \left[ \|\nabla u\|_{L^p(\R^N)}^{p-1} + \|f\|_{L^r(\R^N)} \int_0^2 \rho^{-\frac{N}{r}} \drho \right] \\
&\leq C \left(\|\nabla u\|_{L^p(\R^N)}^{p-1} + \|f\|_{L^r(\R^N)} \right).
\end{split}
\end{equation*}
Taking the supremum in $ x \in \R^N $ on the left yields the conclusion.
\end{proof}

For every $ u \in W^{1,p}_{\rm loc}(\Omega) $, $ x \in B_R \Subset \Omega $, and $ h \in \R^N $ such that $ |h| < \dist(B_R, \partial \Omega) $, we set
\[
u_h(x) := u(x+h), \quad \delta_h u := u_h-u.
\]
Analogous definitions hold for vector-valued functions.
\begin{lemma}
\label{convgrad}
Let $ \{u_n\} \subseteq W^{1,p}_{\rm loc}(\Omega) $ and $ \{f_n\} \subseteq L^{r'}_{\rm loc}(\Omega) $, $ r \in (1,p^*) $, be such that $ u_n $ is a distributional solution to
\begin{equation*}
-\Delta_p u_n = f_n(x) \quad \mbox{\rm in} \; \; \Omega
\end{equation*}
for all $ n \in \N $. Suppose that:
\begin{enumerate}[label={$ {\rm (K_{\arabic*})} $},itemsep=5px]
\item \label{gradbound} \makebox[\textwidth][c]{$ \{\nabla u_n\} \; \; \mbox{is bounded in} \; \; L^p_{\rm loc}(\Omega); $}
\item \label{reactbound} \makebox[\textwidth][c]{$ \{f_n\} \; \; \mbox{is bounded in} \; \; L^{r'}_{\rm loc}(\Omega); $}
\item \label{conv} \makebox[\textwidth][c]{$ u_n \to u \;\; \mbox{in} \; \; L^p_{\rm loc}(\Omega) \cap L^r_{\rm loc}(\Omega). $}
\end{enumerate}
Then $ \{\nabla u_n\} $ admits a strongly convergent subsequence in $ L^p_{\rm loc}(\Omega) $.
\end{lemma}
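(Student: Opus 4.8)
The plan is to apply the Riesz--Fréchet--Kolmogorov $L^p$-compactness criterion to the family $\{\nabla u_n\}$ on every ball $B_R\Subset\Omega$. Boundedness in $L^p(B_R)$ is precisely hypothesis \ref{gradbound}, so the whole issue reduces to the equi-continuity of translations, namely to showing that
\begin{equation*}
\lim_{|h|\to0}\ \sup_{n\in\N}\ \|\delta_h\nabla u_n\|_{L^p(B_R)}=0 \qquad \mbox{for each } B_R\Subset\Omega .
\end{equation*}
Once this is established for each ball, a diagonal argument along an exhaustion of $\Omega$ by such balls produces a single subsequence of $\{\nabla u_n\}$ converging in $L^p_\loc(\Omega)$.

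To prove the displayed limit, fix $B_R\Subset B_{2R}\Subset\Omega$ and a cut-off $\zeta\in C^\infty_c(B_{2R})$ with $0\le\zeta\le1$ and $\zeta\equiv1$ on $B_R$, and take $|h|$ small enough that both $B_{2R}$ and its translate $B_{2R}+h$ lie in a fixed compact subset of $\Omega$. Replacing the test function $\phi$ in the weak formulation for $u_n$ by $\phi(\cdot-h)$ and changing variables shows that $u_{n,h}$ is a distributional solution of $-\Delta_p u_{n,h}=f_{n,h}$ in a neighbourhood of $B_{2R}$; moreover, since $r<p^*$, the weak formulation extends by density to test functions $\phi\in W^{1,p}$ with compact support in $B_{2R}$ (for such $\phi$ one has $f_n\phi,f_{n,h}\phi\in L^1$). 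Testing the equations for $u_n$ and $u_{n,h}$ with $\phi:=\zeta^p\,\delta_h u_n$ and subtracting, I obtain
\begin{equation*}
\int\zeta^p\big(|\nabla u_{n,h}|^{p-2}\nabla u_{n,h}-|\nabla u_n|^{p-2}\nabla u_n\big)\cdot\delta_h\nabla u_n\dx
=\int(f_{n,h}-f_n)\,\zeta^p\,\delta_h u_n\dx-p\int\zeta^{p-1}\,\delta_h u_n\,\big(|\nabla u_{n,h}|^{p-2}\nabla u_{n,h}-|\nabla u_n|^{p-2}\nabla u_n\big)\cdot\nabla\zeta\dx .
\end{equation*}
By Hölder's inequality the first term on the right is bounded by $\|f_{n,h}-f_n\|_{L^{r'}(B_{2R})}\,\|\delta_h u_n\|_{L^r(B_{2R})}$ and the second by $C\big(\|\nabla u_{n,h}\|_{L^p(B_{2R})}^{p-1}+\|\nabla u_n\|_{L^p(B_{2R})}^{p-1}\big)\,\|\delta_h u_n\|_{L^p(B_{2R})}$; by \ref{gradbound}--\ref{reactbound} (and $\|\nabla u_{n,h}\|_{L^p(B_{2R})}=\|\nabla u_n\|_{L^p(B_{2R}+h)}$) the prefactors here are bounded uniformly in $n$ and in small $h$. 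This is where \ref{conv} enters: since $u_n\to u$ in $L^p_\loc\cap L^r_\loc$, the sequence $\{u_n\}$ is precompact in both $L^p(B_{2R})$ and $L^r(B_{2R})$, so the ``precompact $\Rightarrow$ equi-translation-continuous'' direction of Riesz--Fréchet--Kolmogorov gives $\sup_n\|\delta_h u_n\|_{L^p(B_{2R})}\to0$ and $\sup_n\|\delta_h u_n\|_{L^r(B_{2R})}\to0$ as $|h|\to0$. Hence the left-hand side of the identity tends to $0$ as $|h|\to0$, uniformly in $n$.

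It then remains to convert this energy-type bound into control of $\delta_h\nabla u_n$, by means of the usual monotonicity inequalities for $\xi\mapsto|\xi|^{p-2}\xi$. If $p\ge2$ one has $(|\xi|^{p-2}\xi-|\eta|^{p-2}\eta)\cdot(\xi-\eta)\ge C|\xi-\eta|^p$, so the left-hand side of the identity dominates $C\int_{B_R}|\delta_h\nabla u_n|^p\dx$ and the conclusion is immediate. If $1<p<2$ one has instead $(|\xi|^{p-2}\xi-|\eta|^{p-2}\eta)\cdot(\xi-\eta)\ge C(|\xi|+|\eta|)^{p-2}|\xi-\eta|^2$ (understood to be $0$ where $\xi=\eta=0$), and Hölder's inequality with exponents $\tfrac2p$ and $\tfrac2{2-p}$ gives
\begin{equation*}
\int_{B_R}|\delta_h\nabla u_n|^p\dx\le\left(\int_{B_R}\frac{|\delta_h\nabla u_n|^2}{\big(|\nabla u_{n,h}|+|\nabla u_n|\big)^{2-p}}\dx\right)^{\!\frac p2}\left(\int_{B_R}\big(|\nabla u_{n,h}|+|\nabla u_n|\big)^p\dx\right)^{\!\frac{2-p}2},
\end{equation*}
where the first factor is bounded by a positive power of the left-hand side of the identity and the second by \ref{gradbound}. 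In either case $\sup_n\|\delta_h\nabla u_n\|_{L^p(B_R)}\to0$ as $|h|\to0$, so the Riesz--Fréchet--Kolmogorov criterion applies and finishes the proof.

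I expect the conceptual crux to be the previous step, that is the observation that it is the \emph{precompactness} of $\{u_n\}$ in $L^p_\loc$ and $L^r_\loc$ --- and not merely its boundedness --- that forces $\sup_n\|\delta_h u_n\|$ to be small as $|h|\to0$, which is exactly what is needed to estimate the right-hand side of the basic identity uniformly in $n$. The remaining points are routine: the admissibility of the test function $\zeta^p\,\delta_h u_n$ (guaranteed by $r<p^*$, so that $f_n\,\phi\in L^1_\loc$), the translation of the equation, and, in the singular range $1<p<2$, the harmless vanishing of $|\nabla u_{n,h}|+|\nabla u_n|$ in the denominator of the last display.
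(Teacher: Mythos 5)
Your proposal is correct and follows essentially the same route as the paper's proof: test the equation and its translate with a cut-off times $\delta_h u_n$, use the precompactness of $\{u_n\}$ in $L^p_{\rm loc}\cap L^r_{\rm loc}$ (via the converse direction of Riesz--Fréchet--Kolmogorov) to make the right-hand side small uniformly in $n$, and then convert the monotonicity quantity into control of $\|\delta_h\nabla u_n\|_{L^p}$ by distinguishing $p\ge 2$ from $1<p<2$ before applying Riesz--Fréchet--Kolmogorov to $\{\nabla u_n\}$. The only differences are cosmetic: the cut-off power $\zeta^p$ versus $\eta^2$, the weight $(|\xi|+|\eta|)^{p-2}$ versus $(1+|\xi|^2+|\eta|^2)^{\frac{p-2}{2}}$ in the singular case, and obtaining the translated equation directly rather than by testing the original one with $\phi_{-h}$ and changing variables.
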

\begin{proof}
Fix $ R > 0 $ such that $ B_R \Subset \Omega $. A density argument produces
\begin{equation}
\label{comptest}
\int_{B_R} |\nabla u_n|^{p-2} \nabla u_n \cdot \nabla \phi \dx = \int_{B_R} f_n \phi \dx
\end{equation}
for any $ n \in \N $ and $ \phi \in W^{1,p}_0(B_R) $. Now pick $ t,s > 0 $ such that $ B_t \Subset B_s \Subset B_R $ and $ \eta \in C^\infty_c(B_s) $ such that $ 0 \leq \eta \leq 1 $, $ \eta \equiv 1 $ on $ B_t $, and $ |\nabla \eta| \leq \frac{C}{s-t} $ for some $C>0$. If $ V_n := |\nabla u_n|^{p-2} \nabla u_n $ then using \eqref{comptest} with $ \phi := \eta^2 \delta_h u_n $, where $ |h| < R-s $, gives
\begin{equation}
\label{comptest1}
\int_{B_R} \eta^2 \, V_n \cdot \delta_h(\nabla u_n) \dx + 2 \int_{B_R} \eta \, \delta_h u_n \, V_n \cdot \nabla \eta \dx = \int_{B_R} f_n \phi \dx.
\end{equation}
Next, exploit \eqref{comptest} with $ \phi_{-h} $, perform the change of variable $ x \mapsto x+h $ on the left-hand side, and recall that $ B_{s+|h|} \subseteq B_R $, to achieve
\begin{equation}
\label{comptest2}
\int_{B_R} \eta^2 \, (V_n)_h \cdot \delta_h(\nabla u_n) \dx + 2 \int_{B_R} \eta \, \delta_h u_n \, (V_n)_h \cdot \nabla \eta \dx = \int_{B_R} f_n \phi_{-h} \dx.
\end{equation}
So, subtracting \eqref{comptest1} from \eqref{comptest2} yields
\begin{equation*}
\int_{B_R} \eta^2 \, \delta_h V_n \cdot \delta_h(\nabla u_n) \dx + 2 \int_{B_R} \eta \, \delta_h u_n \, \delta_h V_n \cdot \nabla \eta \dx = \int_{B_R} f_n \delta_{-h} \phi \dx.
\end{equation*}
Since $\supp \eta \subseteq B_s$, this entails
\begin{equation}
\label{comptest3}
\begin{split}
\int_{B_t} \delta_h V_n \cdot \delta_h(\nabla u_n) \dx &\leq \int_{B_R} \eta^2 \, \delta_h V_n \cdot \delta_h(\nabla u_n) \dx \\
&\leq 2 \int_{B_R}  |\delta_h u_n| |\delta_h V_n| |\nabla \eta| \dx + \int_{B_R} |f_n| |\delta_{-h} \phi| \dx \\
&\leq \frac{C}{s-t} \, \|\delta_h u_n\|_{L^p(B_s)} \|\delta_h V_n\|_{L^{p'}(B_s)} + \|f_n\|_{L^{r'}(B_s)} \|\delta_{-h} \phi\|_{L^r(B_s)} \\
&\leq \frac{C}{s-t} \, \|\delta_h u_n\|_{L^p(B_R)} \left(\|(V_n)_h\|_{L^{p'}(B_s)} + \|V_n\|_{L^{p'}(B_s)}\right) \\
&\quad + \|f_n\|_{L^{r'}(B_R)} \left(\|\phi_{-h}\|_{L^r(B_s)} + \|\phi\|_{L^r(B_s)}\right) \\
&\leq \frac{2C}{s-t} \, \|\delta_h u_n\|_{L^p(B_R)} \|V_n\|_{L^{p'}(B_R)} + 2 \, \|f_n\|_{L^{r'}(B_R)} \|\delta_h u_n\|_{L^r(B_R)} \\
& \leq C \left(\|\delta_h u_n\|_{L^p(B_R)} \|\nabla u_n\|_{L^p(B_R)}^{p-1} + \|f_n\|_{L^{r'}(B_R)} \|\delta_h u_n\|_{L^r(B_R)}\right),
\end{split}
\end{equation}
where H\"older's inequality has been used twice while $ C = C(N,t,s) > 0 $. Notice that, thanks to \ref{gradbound}--\ref{conv} and \cite[Exercise 4.34]{B}, the last term of \eqref{comptest3} vanishes as $ h \to 0^+ $ uniformly in $ n $. 
Let us now distinguish two cases, namely $ p \geq 2 $ and $ p \in (1,2) $. \\
\textbf{Case 1.} If $ p \geq 2 $ then
\begin{equation}
\label{compsx1}
\begin{split}
\int_{B_t} \delta_h V_n \cdot \delta_h(\nabla u_n) \dx &= \int_{B_t} (|\nabla (u_n)_h|^{p-2} \nabla (u_n)_h - |\nabla u_n|^{p-2} \nabla u_n) \cdot (\nabla (u_n)_h - \nabla u_n) \dx \\
&\geq c \, \|(\nabla u_n)_h - \nabla u_n\|_{L^p(B_t)}^p = c \, \|\delta_h (\nabla u_n)\|_{L^p(B_t)}^p,
\end{split}
\end{equation}
with $c>0$ small enough; cf. \cite[Chapter 12, inequality (I)]{Lind}. By \eqref{comptest3}--\eqref{compsx1} we thus obtain $ \delta_h (\nabla u_n) \to 0 $ in $ L^p(B_t) $ as $ h \to 0^+ $  uniformly in $ n $, and the Riesz-Fréchet-Kolmogorov yields the conclusion, because $t>0$ was arbitrary. \\
\textbf{Case 2.} For $ p \in (1,2) $ one has  (see \cite[Chapter 12, inequality (VII)]{Lind})
\begin{equation}
\label{compsx2}
\begin{split}
\int_{B_t} \delta_h V_n \cdot \delta_h(\nabla u_n) \dx &= \int_{B_t} (|\nabla (u_n)_h|^{p-2} \nabla (u_n)_h - |\nabla u_n|^{p-2} \nabla u_n) \cdot (\nabla (u_n)_h - \nabla u_n) \dx \\
&\geq c \int_{B_t} (1 + |\nabla (u_n)_h|^2 + |\nabla u_n|^2)^{\frac{p-2}{2}} |\nabla (u_n)_h - \nabla u_n|^2 \dx \\
&= c \int_{B_t} W_{nh} \, |\delta_h (\nabla u_n)|^2 \dx,
\end{split}
\end{equation}
where $c>0$ is sufficiently small while $ W_{nh}:= (1 + |\nabla (u_n)_h|^2 + |\nabla u_n|^2)^{\frac{p-2}{2}} $. H\"older's inequality with exponents $ \frac{2}{p} $ and $ \frac{2}{2-p} $, besides \ref{gradbound}, produce
\begin{equation}
\label{compsx3}
\begin{split}
\| \delta_h (\nabla u_n)\|_{L^p(B_t)}^p &= \int_{B_t} W_{nh}^{\frac{p}{2}} \, |\delta_h (\nabla u_n)|^p \, W_{nh}^{-\frac{p}{2}} \dx \\
&\leq \left( \int_{B_t} W_{nh} \, |\delta_h (\nabla u_n)|^2 \dx \right)^{\frac{p}{2}} \left( \int_{B_t}  W_{nh}^{\frac{p}{p-2}} \dx \right)^{\frac{2-p}{2}} \\
&\leq \left( \int_{B_t} W_{nh} \, |\delta_h (\nabla u_n)|^2 \dx \right)^{\frac{p}{2}} \left(|B_t| + 2 \, \|\nabla u_n\|_{L^p(B_R)}^p\right)^{\frac{2-p}{2}} \\
&\leq C \left( \int_{B_t} W_{nh} \, |\delta_h (\nabla u_n)|^2 \dx \right)^{\frac{p}{2}}.
\end{split}
\end{equation}
Reasoning as in the above case, the conclusion directly follows from \eqref{comptest3}, \eqref{compsx2}, and \eqref{compsx3}.
\end{proof}
\begin{rmk}
\label{alternative}
Lemma \ref{convgrad} can be proved also (in a less direct way) through a result by Boccardo and Murat \cite{BM} which, under the hypotheses of Lemma \ref{convgrad}, ensures that
\begin{equation}
\label{BocMur}
\nabla u_n \to \nabla u \quad \mbox{in} \;\; L^q_{\rm loc}(\Omega) \quad \forall \, q \in (1,p).
\end{equation}
In particular, \eqref{BocMur} implies $ \nabla u_n \to \nabla u $ a.e.~in $ \Omega $. A development of this approach, allowing $q=p$, is contained in \cite[Lemma 2.5 and Remark 3]{GG}. Another way \cite{CM-RatMec,GM} to get convergence of gradient terms is using a differentiability result for the stress field, i.e., the field whose divergence represents the elliptic operator (as $|\nabla u|^{p-2}\nabla u$ for the $p$-Laplacian). In fact,  by Rellich-Kondrachov's theorem \cite[Theorem 9.16]{B},  such a differentiability allows to gain compactness.
\end{rmk}

\begin{thm}
\label{weaksol}
Let \hyperlink{H1}{${\rm H_1}$}--\ref{hypeta} be satisfied. Then problem \eqref{prob} possesses a weak solution $(u,v) \in \D^{1,p}_0(\R^N)\times\D^{1,q}_0(\R^N)$.
\end{thm}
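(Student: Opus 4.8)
The plan is to retrace the proof of \cite[Lemma 4.1]{GMM}, isolating the three ingredients that can be weakened. First I would introduce approximating problems exactly as in \cite{GMM}: for each $n\in\N$, shift the singular arguments of the reactions — i.e.\ replace $f(x,s_1,s_2,{\bf t}_1,{\bf t}_2)$ by $f(x,s_1+\tfrac1n,s_2+\tfrac1n,{\bf t}_1,{\bf t}_2)$, likewise $g$ — localize the weights $a_i$ to $B_n$, and solve the resulting system on $\R^N$ by a fixed-point argument. Then the reactions of the approximating problem are bounded and compactly supported, hence lie in $L^{(p^*)'}\cap L^{\zeta_1}$, resp.\ $L^{(q^*)'}\cap L^{\zeta_2}$; since $\zeta_i>N$, \cite{L} in place of \cite{DB} yields the local $C^{1,\alpha}$ regularity of the approximate solutions $(u_n,v_n)$ requiring merely $a_i\in L^{\zeta_i}$ and not a higher local summability — the first relaxation — while Lemma \ref{mingionecor} in place of \cite[Lemma 2.1]{GMM} supplies $\nabla u_n,\nabla v_n\in L^\infty(\R^N)$ for each $n$ without assuming $p,q>2-\tfrac1N$ — the second relaxation; here $(u_n,v_n)\in\D^{1,p}_0(\R^N)\times\D^{1,q}_0(\R^N)$ and $u_n,v_n>0$ by the strong maximum principle.

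Next I would reproduce the a priori estimates of \cite{GMM}. Testing the two equations with $u_n$ and $v_n$ and using \ref{hypf}--\ref{hypg}, H\"older's inequality, the Sobolev embeddings $\D^{1,p}_0\hookrightarrow L^{p^*}$, $\D^{1,q}_0\hookrightarrow L^{q^*}$ and $a_i\in L^1\cap L^{\zeta_i}$, hypothesis \ref{hypa} — through $\theta_1,\theta_2$ and $\zeta_1,\zeta_2$ — keeps every reaction integral finite and controlled, while $\alpha_1,\beta_2\le0$, $\gamma_1<p-1$, $\delta_2<q-1$ and \ref{hypeta} let the resulting coupled inequalities between $\|\nabla u_n\|_{L^p(\R^N)}$ and $\|\nabla v_n\|_{L^q(\R^N)}$ close up with constants independent of $n$. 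A parallel comparison-and-bootstrap argument — using $\essinf_{B_\rho}a_i>0$, comparison with explicit positive subsolutions, and the elementary fact that a positive number $t$ with $t\ge c\,t^\lambda$, $\lambda<1$, is bounded away from zero — then yields uniform local lower bounds $u_n,v_n\ge c_\rho>0$ on every ball $B_\rho$, so that (since $\alpha_1,\beta_2\le0$) the singular factors $(u_n+\tfrac1n)^{\alpha_1}$ and $(v_n+\tfrac1n)^{\beta_2}$ remain bounded on $B_\rho$ uniformly in $n$.

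The only genuine change is the passage to the limit. From the uniform bounds one extracts a subsequence with $u_n\rightharpoonup u$ in $\D^{1,p}_0(\R^N)$, $v_n\rightharpoonup v$ in $\D^{1,q}_0(\R^N)$, and $u_n\to u$, $v_n\to v$ in $L^s_{\loc}(\R^N)$ — for $s<p^*$, resp.\ $s<q^*$ — and a.e. Denoting by $h_n,k_n$ the approximating reactions evaluated at $(u_n,v_n,\nabla u_n,\nabla v_n)$, the local lower bounds of the previous step bound the singular part of $h_n,k_n$, and together with \ref{hypf}--\ref{hypa} this makes $\{h_n\}$, $\{k_n\}$ bounded in $L^{r'}_{\loc}(\R^N)$ for a suitable $r\in(1,p^*)$, resp.\ $(1,q^*)$ — precisely the margin afforded by $\tfrac1{\zeta_1}+\theta_1<1-\tfrac{p}{p^*}$ and its companion in \ref{hypa}. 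Applying Lemma \ref{convgrad} on the balls $B_n$ and diagonalizing then upgrades the convergence to $\nabla u_n\to\nabla u$ in $L^p_{\loc}(\R^N)$ and $\nabla v_n\to\nabla v$ in $L^q_{\loc}(\R^N)$, in particular a.e.; this replaces \cite[formula (4.5)]{GMM}, whose proof needed both ${\rm H_3}$ and $p,q>2-\tfrac1N$ — the third relaxation. One then passes to the limit in $\int_{\R^N}|\nabla u_n|^{p-2}\nabla u_n\cdot\nabla\phi\dx=\int_{\R^N}h_n\phi\dx$, $\phi\in C^\infty_c(\R^N)$ — the left-hand side because $|\nabla u_n|^{p-2}\nabla u_n\to|\nabla u|^{p-2}\nabla u$ in $L^{p'}_{\loc}(\R^N)$, the right-hand side by Vitali's theorem, the integrands being equi-integrable on $\supp\phi$ by \ref{hypa} and the strong local convergences — and analogously for $v$. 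Thus $(u,v)$ solves \eqref{prob} in the distributional sense, with $u,v>0$; checking finally that $f(\cdot,u,v,\nabla u,\nabla v)\in L^{(p^*)'}(\R^N)$ and $g(\cdot,u,v,\nabla u,\nabla v)\in L^{(q^*)'}(\R^N)$ — the delicate terms $a_1u^{\alpha_1}v^{\beta_1}$, $a_2u^{\alpha_2}v^{\beta_2}$ with negative exponents being treated, as in \cite{GMM}, via the decay lower bound of $u$, resp.\ $v$, at infinity forced by the singular term and $a_i\in L^1(\R^N)$ — upgrades $(u,v)$ to a weak solution in $\D^{1,p}_0(\R^N)\times\D^{1,q}_0(\R^N)$.

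I expect the main obstacle to lie in the compactness step, more precisely in verifying the hypotheses of Lemma \ref{convgrad}: one must simultaneously close the coupled a priori estimates with $n$-independent constants — which hinges on the sharp form of \ref{hypeta} — and keep the singular reaction terms under uniform local control — which requires the uniform local positivity of $u_n,v_n$ — and only with both at hand do the $L^{r'}_{\loc}$ bounds on $\{h_n\},\{k_n\}$ become available. By contrast, the convergence of the gradients itself, the stumbling block in \cite{GMM}, is now handed off directly to Lemma \ref{convgrad}.
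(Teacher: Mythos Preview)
Your proposal is correct and follows essentially the same route as the paper: build the approximating problems $(\mathrm{P}^{\eps_n})$ from \cite{GMM}, import the uniform a priori bounds and local lower bounds via \cite[Lemmas 3.5--3.8]{GMM}, use \ref{hypa} to place the approximate reactions in $L^{r'}_{\loc}$, invoke Lemma \ref{convgrad} for local $L^p$-compactness of the gradients, pass to the limit in the distributional formulation, and upgrade to a weak solution through \cite[Lemma 4.2]{GMM}. The only cosmetic discrepancy is that the paper shifts just the singular argument in each reaction ($f(x,u+\eps_n,v,\cdot)$ and $g(x,u,v+\eps_n,\cdot)$) rather than both, and does not truncate the weights $a_i$; neither point affects the argument.
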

\begin{proof}
The reasoning is patterned after that of \cite[Lemma 4.1]{GMM}, so here we only sketch it. Pick $r,s>1$ such that
\begin{equation}
\label{summability}
\frac{1}{\zeta_1}+\theta_1 < \frac{1}{r'} < 1-\frac{p}{p^*}, \quad \frac{1}{\zeta_2}+\theta_2 < \frac{1}{s'} < 1-\frac{q}{q^*},
\end{equation}
which is possible thanks to \ref{hypa}. Fix $\rho>0$ and define $\eps_n:=\frac{1}{n}$,  $n\in\N$. By \cite[Lemmas 3.5--3.8]{GMM}, for all $n\in\N$ there exists $(u_n,v_n)\in(\D^{1,p}_0(\R^N)\times\D^{1,q}_0(\R^N))\cap C^{1,\alpha}_\loc(\R^N)^2$ solution to
\begin{equation}
\tag{$ {\rm P}^{\eps_n} $}
\left\{
\begin{alignedat}{2}
-\Delta_p u &= f(x,u+\eps_n,v,\nabla u, \nabla v) \; \; &&\mbox{in} \; \; \R^N, \\
-\Delta_q v &= g(x,u,v+\eps_n,\nabla u, \nabla v) \; \; &&\mbox{in} \; \; \R^N, \\
u,v &> 0 \; \; &&\mbox{in} \; \; \R^N,
\end{alignedat}
\right.
\end{equation}
such that the following properties hold true, with appropriate $(u,v)\in\D^{1,p}_0(\R^N)\times\D^{1,q}_0(\R^N)$ and $M,\sigma_{2\rho}>0$:
\begin{equation}
\label{props}
\begin{alignedat}{2}
(u_n,v_n) &\rightharpoonup (u,v) \quad &&\mbox{in} \;\; \D^{1,p}_0(\R^N)\times\D^{1,q}_0(\R^N); \\
(u_n,v_n) &\to (u,v) \quad &&\mbox{in} \;\; W^{1,p}(B_{2\rho})\times W^{1,q}(B_{2\rho}); \\
(u_n,v_n) &\to (u,v) \quad &&\mbox{in} \;\; L^r(B_{2\rho})\times L^s(B_{2\rho}); \\
(\nabla u_n,\nabla v_n) &\to (\nabla u,\nabla v) \quad &&\mbox{a.e. in} \;\; \R^N; \\
\max\left\{\|u_n\|_{L^\infty(\R^N)}, \|v_n\|_{L^\infty(\R^N)}\right\} &\leq M \quad &&\forall\, n\in\N; \\
\min\left\{\inf_{B_{2\rho}} u_n, \inf_{B_{2\rho}} v_n\right\} &\geq \sigma_{2\rho} \quad &&\forall\, n\in\N.
\end{alignedat}
\end{equation}
Hence, \ref{hypf} and \eqref{props} yield, for almost every $x\in B_{2\rho}$,
\begin{equation}
\label{estonball}
\begin{split}
&f(x,u_n(x)+\eps_n,v_n(x),\nabla u_n(x),\nabla v_n(x)) \\
&\leq \hat{m}_1 a_1(x)\left[(u_n(x)+\eps_n)^{\alpha_1}v_n(x)^{\beta_1}+|\nabla u_n(x)|^{\gamma_1}
+|\nabla v_n(x)|^{\delta_1}\right] \\
&\leq \hat{m}_1 a_1(x)\left(\sigma_{2\rho}^{\alpha_1}M^{\beta_1}+|\nabla u_n(x)|^{\gamma_1}+|\nabla v_n(x)|^{\delta_1}\right).
\end{split}
\end{equation}
By \eqref{props} the sequence $\{(\nabla u_n,\nabla v_n)\}$ is bounded in $L^p(B_{2\rho})\times L^q(B_{2\rho})$. Exploiting \ref{hypa}, \eqref{summability}, and \eqref{estonball} we thus see that
\begin{equation*}
\{f(\cdot,u_n+\eps_n,v_n,\nabla u_n,\nabla v_n)\} \quad \mbox{is bounded in} \;\; L^{r'}(B_{2\rho}).
\end{equation*}
Accordingly, Lemma \ref{convgrad}, with $\Omega:=B_{2\rho}$, besides \eqref{props}, produces $\nabla u_n\to\nabla u$ in $L^p(B_\rho)$. Now the proof goes on exactly as in \cite[Lemma 4.1]{GMM}, ensuring that $(u,v)$ is a distributional solution to \eqref{prob}. The conclusion is achieved by invoking \cite[Lemma 4.2]{GMM}, which shows that any distributional solution to \eqref{prob} is actually a weak one.
\end{proof}

\begin{rmk}
\label{strongsol}
An advantage of using differentiability results for the stress field (see Remark \ref{alternative}) in this context is the possibility to obtain strong solutions to \eqref{prob}, as done in \cite[Lemma 4.3]{GMM}: indeed, otherwise we do not know how to give a pointwise (a.e.)~sense to the $p$-Laplacian operator, seen as the divergence of the stress field $|\nabla u|^{p-2}\nabla u$. This issue is linked to a well-known conjecture for \eqref{mingioneprob}, which can be stated as
\begin{equation*}
f\in L^{r}_\loc(\Omega) \quad \stackrel{?}{\Leftrightarrow} \quad |\nabla u|^{p-2}\nabla u \in W^{1,r}_\loc(\Omega).
\end{equation*}
For a discussion about this conjecture, see \cite[Section 1]{GM}.
\end{rmk}

\section*{Acknowledgments}

The authors are members of \textit{Gruppo Nazionale per l'Analisi Matematica, la Probabilità e le loro Applicazioni} (GNAMPA) of the \textit{Istituto Nazionale di Alta Matematica} (INdAM).\\
They were supported by the following research projects: 1) PRIN 2017 `Nonlinear Differential Problems via Variational, Topological and Set-valued Methods' (Grant no. 2017AYM8XW) of MIUR; 2) `MO.S.A.I.C.' PRA 2020--2022 `PIACERI' Linea 2 (S.A. Marano) and Linea 3 (U. Guarnotta) of the University of Catania. U. Guarnotta also acknowledges the support of GNAMPA-INdAM Project CUP\_E55F22000270001.

\noindent
\textbf{Conflict of interest statement.} On behalf of all authors, the corresponding author states that there is no conflict of interest.

\begin{small}

\end{small}


\begin{thebibliography}{99}
\bibitem{BM}
L. Boccardo and F. Murat, \textit{Almost everywhere convergence of the gradients of solutions to elliptic and parabolic equations}, Nonlinear Anal. \textbf{19} (1992), 581–597.
\bibitem{B}
H. Brezis, \textit{Functional analysis, Sobolev spaces and partial differential equations}, Universitext, Springer, New York, 2011.
\bibitem{CM-NA}
A. Cianchi and V.G. Maz'ya, \textit{Quasilinear elliptic problems with general growth and merely integrable, or measure, data}, Nonlinear Anal. \textbf{164} (2017), 189–215.
\bibitem{CM-RatMec}
A. Cianchi and V.G. Maz'ya, \textit{Second-order two-sided estimates in nonlinear elliptic problems}, Arch. Ration. Mech. Anal. \textbf{229} (2018), 569–599.
\bibitem{DB}
E. DiBenedetto, \textit{$ C^{1+\alpha} $ local regularity of weak solutions of degenerate elliptic equations}, Nonlinear Anal. \textbf{7} (1983), 827–850.
\bibitem{DM}
F. Duzaar and G. Mingione, \textit{Local Lipschitz regularity for degenerate elliptic systems}, Ann. Inst. H. Poincaré Anal. Non Linéaire \textbf{27} (2010), 1361–1396.
\bibitem{GG}
L. Gambera and U. Guarnotta, \textit{Strongly singular convective elliptic equations in $\R^N$ driven by a non-homogeneous operator}, Commun. Pure Appl. Anal. \textbf{21} (2022), 3031–3054.
\bibitem{GLM1}
U. Guarnotta, R. Livrea, and S.A. Marano, \textit{Some recent results on singular $p$-Laplacian equations}, Demonstr. Math. \textbf{55} (2022), 416–428.
\bibitem{GLM2}
U. Guarnotta, R. Livrea, and S.A. Marano, \textit{Some recent results on singular $p$-Laplacian systems}, Discrete Contin. Dyn. Syst. Ser. S, doi:10.3934/dcdss.2022170.
\bibitem{GMM}
U. Guarnotta, S.A. Marano, and A. Moussaoui, \textit{Singular quasilinear convective elliptic systems in $\R^N$}, Adv. Nonlinear Anal. \textbf{11} (2022), 741–756.
\bibitem{GM}
U. Guarnotta and S. Mosconi, \textit{A general notion of uniform ellipticity and the regularity of the stress field for elliptic equations in divergence form}, Anal. PDE, to appear.
\bibitem{L}
G.M. Lieberman, \textit{The natural generalization of the natural conditions of Ladyzhenskaya and Ural'tseva for elliptic equations}, Comm. Partial Differential Equations \textbf{16} (1991), 311–361.
\bibitem{Lind}
P. Lindqvist, \textit{Notes on the $p$-Laplace equation}, Report, University of Jyväskylä Department of Mathematics and Statistics \textbf{102}, University of Jyväskylä, Jyväskylä, 2006.
\end{thebibliography}
\end{document}